\newtheorem{thm}{Theorem}
\newtheorem*{conj*}{Conjecture}
\newtheorem{cor}{Corollary}
\title{The Diophantine equation $b (b+1) (b+2) = t a (a + 1) (a + 2)$ and gap principle}
\author{Tsz Ho Chan}
\date{}
\begin{document}
\maketitle

\begin{abstract}
In this article, we are interested in whether a product of three consecutive integers $a (a+1) (a+2)$ divides another such product $b (b+1) (b+2)$. If this happens, we prove that there is some gaps between them, namely $b \gg \frac{a (\log a)^{1/6}}{(\log \log a)^{1/3}}$. We also consider other polynomial sequences such as $a^2 (a^2 + l)$ dividing $b^2 (b^2 + l)$ for some fixed integer $l$. Our method is based on effective Liouville-Baker-Feldman theorem.
\end{abstract}

\section{Introduction and main results}

Consider products of consecutive positive integers $1 \cdot 2 = 2, \, 2 \cdot 3 = 6, \, 3 \cdot 4 = 12, \ldots$. Observe that $3 \cdot 4 = 2 (2 \cdot 3)$. So, one may ask if there are infinitely many integers $1 \le a < b$ such that $b (b+1) = 2 a (a + 1)$. Multiplying by $4$ and completing the squares, we get
\begin{equation} \label{eqpell}
(4b^2 + 4b) = 2 (4a^2 + 4a) \; \; \text{ or } \; \; (2b + 1)^2 - 2(2a + 1)^2 = -1.
\end{equation}
By the theory of Pell equation, all the positive integer solutions of $x^2 - 2y^2 = -1$ are generated by $x_n + y_n \sqrt{2} = (1 + \sqrt{2})^{2n - 1}$. One can prove by induction that all $x_n$'s and $y_n$'s are odd numbers. Therefore, we have infinitely many $a$ and $b$ satisfying \eqref{eqpell}, namely
\[
a = \frac{y_n - 1}{2} \; \; \text{ and } \; \; b = \frac{x_n - 1}{2}.
\] 
Hence, it can occur infinitely often that products of consecutive integers share the same prime factors, and one such product is the double of another one.

\bigskip

Next, it is natural to consider products of three consecutive positive integers $1 \cdot 2 \cdot 3 = 6, \, 2 \cdot 3 \cdot 4 = 24, \, 3 \cdot 4 \cdot 5 = 60, \ldots$. Note that $4 \cdot 5 \cdot 6 = 2 (3 \cdot 4 \cdot 5)$. Again, one may ask if there are infinitely many positive integers $a < b$ such that $b (b+1) (b+2) = 2 a (a + 1) (a + 2)$. It turns out that the answer is negative in this case. We deduce this by connecting the question to rational approximation of algebraic numbers and applying effective Liouville-Baker-Feldman theorem in the areas of Diophantine approximation and transcendental numer theory.
\begin{thm} \label{thm1}
The Diophantine equation $b (b+1) (b+2) = 2 a (a + 1) (a + 2)$ has $a = 3$ and $b = 4$ as its only positive integer solution.
\end{thm}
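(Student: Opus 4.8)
The plan is to remove the shifts by setting $X = b+1$ and $Y = a+1$, so that $b(b+1)(b+2) = (b+1)^3 - (b+1) = X^3 - X$ and the equation becomes $X^3 - X = 2(Y^3 - Y)$, i.e.
\[
X^3 - 2Y^3 = X - 2Y ,
\]
to be solved in integers $X, Y \ge 2$. A short monotonicity argument (using that $t \mapsto t^3 - t$ increases on $[1,\infty)$ and that $X^3 = 2Y^3 - 2Y + X$) shows $2 \le Y < X < 2Y$, so $N := 2Y - X$ is a positive integer. The obvious estimate is of no use here: since $X \approx 2^{1/3}Y$ we get $|X^3 - 2Y^3| = |X - 2Y| = N \asymp Y$, and the Thue \emph{inequality} $|X^3 - 2Y^3| \ll Y$ already has infinitely many solutions, coming from the convergents of $2^{1/3}$. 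The whole point is therefore to squeeze out of the \emph{equation} a Thue equation with a \emph{bounded} right-hand side.

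To do this I would put $d = \gcd(X,Y)$ and write $X = dX'$, $Y = dY'$ with $\gcd(X',Y') = 1$ and $X', Y' \ge 1$. Dividing the equation by $d$ gives $d^2 (X'^3 - 2Y'^3) = X' - 2Y'$; since $X' < 2Y'$ the right-hand side is negative, so $e := 2Y'^3 - X'^3$ is a positive integer and $2Y' - X' = d^2 e$. Now combine the factorisation $8Y'^3 - X'^3 = (2Y' - X')(X'^2 + 2X'Y' + 4Y'^2)$ with the identity $8Y'^3 - X'^3 = 6Y'^3 + e$ (immediate from $X'^3 = 2Y'^3 - e$) to obtain
\[
6Y'^3 = e\bigl( d^2 (X'^2 + 2X'Y' + 4Y'^2) - 1 \bigr).
\]
Hence $e \mid 6Y'^3$; and since $\gcd(e, Y') = \gcd(2Y'^3 - X'^3, Y') = \gcd(X'^3, Y') = 1$, we conclude $e \mid 6$. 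Thus every positive solution produces a solution $(X', Y')$, with $X', Y' \ge 1$, of one of the four Thue equations $X'^3 - 2Y'^3 = -e$ with $e \in \{1, 2, 3, 6\}$.

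This is where the effective Liouville--Baker--Feldman theorem does its work. For such a solution, factoring over $\mathbb{R}$ yields
\[
\Bigl| 2^{1/3} - \frac{X'}{Y'} \Bigr| = \frac{e}{2^{2/3} Y'^{3} + 2^{1/3} X' Y'^{2} + X'^{2} Y'} \le \frac{6}{2^{2/3}}\cdot\frac{1}{Y'^{3}} ,
\]
whereas the effective Liouville--Baker--Feldman bound for the cubic irrational $2^{1/3}$, $|2^{1/3} - p/q| > c\, q^{-\kappa}$ with effectively computable $c > 0$ and $\kappa < 3$, forces $Y'$, and then $X' < 2Y'$, to be below an explicit constant $B$. (Equivalently, one invokes the effective resolution of Thue's equation, of which these four are the archetype.) Then $d^2 \le d^2 e = 2Y' - X' < 2B$, so $d$, and hence $X = dX'$ and $Y = dY'$, are effectively bounded. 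The proof finishes with a routine search: list the (very few) solutions of the four Thue equations, for each one test whether $(2Y' - X')/e$ is a perfect square $d^2$ and whether $(X, Y) = (dX', dY')$ satisfies $2 \le Y < X$; the only survivor is $(X, Y) = (5, 4)$, from $(X', Y') = (5,4)$ on $X'^3 - 2Y'^3 = -3$ with $d = 1$, giving $(a, b) = (3, 4)$, and one checks directly that $4 \cdot 5 \cdot 6 = 2 \cdot (3 \cdot 4 \cdot 5)$.

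The step I expect to be the main obstacle — the one that must be gotten exactly right — is the reduction to the bounded Thue equations $X'^3 - 2Y'^3 = -e$: from the bare equation one only sees a difference of cubes of size $\asymp Y$, which is far too large for the (best possible) exponent $\kappa < 3$ in Liouville--Baker--Feldman to bite, so the $\gcd$-argument together with the difference-of-cubes identity above is genuinely doing the heavy lifting. Once that reduction is in place the transcendence input is a black box and the remaining case analysis is entirely elementary.
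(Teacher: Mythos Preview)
Your proposal is correct and follows essentially the same route as the paper: after the shift and the $\gcd$ reduction you arrive at exactly the paper's equation $v^3 - 2u^3 = -s$ with $s \mid 6$ (your $(X',Y',e)$ are the paper's $(v,u,s)$), and then an effective irrationality measure for $2^{1/3}$ bounds the variables for a finite check. The only substantive difference is that the paper makes the endgame fully explicit by invoking Bennett's bound $|2^{1/3}-p/q|>1/(4q^{2.45})$ to force $u\le 35$ and then verifying the remaining range directly, whereas you leave the Liouville--Baker--Feldman input and the final search as black boxes.
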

It is worthwhile to note that various people have studied similar Diophantine equations as above. For example,
\begin{align*}
\text{Mordell 1963 \cite{m:1963}: } & \; \; \; b (b + 1) = a (a + 1) (a + 2), \\
\text{Cohn 1971 \cite{c:1971}: } & \; \; \; b (b + 1) (b + 2) (b + 3) = 2 a (a + 1) (a + 2) (a + 3), \\
\text{Boyd, Kisilevsky 1972 \cite{bk:1972}: } & \; \; \; b (b + 1) (b + 2) = a (a + 1) (a + 2) (a + 3), \\
\text{Saradha, Shorey, Tijdeman 1995 \cite{sst:1995}: } & \; \; \; b (b + d_1) \cdots (b + (k-1) d_1) = a (a + d_2) \cdots (a + (k -1) d_2), \\
\text{Beukers, Shorey, Tijdeman 1999 \cite{bst:1999}: } & \; \; \; b (b + d_1) \cdots (b + (m-1) d_1) = a (a + d_2) \cdots (a + (n-1) d_2), \\
\text{Rakaczki 2003 \cite{r:2003}: } & \; \; \; b (b + 1) \cdots (b + (m-1)) = \lambda a (a + 1) \cdots (a + (n-1)) + l,
\end{align*} 
to name a few. Strengthening Theorem \ref{thm1}, one can prove some sort of gap between divisible terms of products of three consecutive integers as follows, but let us introduce some notation first.

\bigskip

{\bf Notation } The symbol $a \mid b$ means that $a$ divides $b$. The symbols $f(x) \ll g(x)$ and $g(x) \gg f(x)$ are all equivalent to $|f(x)| \leq C g(x)$ for some constant $C > 0$. Finally, $f(x) = O_{\lambda_1, \ldots, \lambda_r} (g(x))$, $f(x) \ll_{\lambda_1, \ldots, \lambda_r} g(x)$ or $g(x) \gg_{\lambda_1, \ldots, \lambda_r} f(x)$ mean that the implicit constant $C$ may depend on certain parameters $\lambda_1, \ldots, \lambda_r$.

\begin{thm} \label{thm2}
Given a positive integer $l$. Suppose $a (a+l) (a+2l) \mid b (b+l) (b+2l)$ for some sufficiently large positive integers $a < b$ (in terms of $l$). Then
\[
b \gg_l \frac{a (\log a)^{1/6}}{(\log \log a)^{1/3}}.
\]
\end{thm}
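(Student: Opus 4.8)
The plan is to argue by contradiction. Suppose $b < c_l\, a(\log a)^{1/6}/(\log\log a)^{1/3}$ for a small constant $c_l$ to be fixed later, and set $t := b(b+l)(b+2l)/\bigl(a(a+l)(a+2l)\bigr)$, a positive integer which is $\ge 2$ because $b>a$. Comparing leading terms gives $b=(1+o_l(1))\,t^{1/3}a$, so the assumption forces $t\ll_l\sqrt{\log a}/\log\log a$, and in particular $\log t\ll\log\log a$; I will show this is impossible once $a$ is large. Centering the variables by $x=b+l$, $y=a+l$ rewrites the divisibility, with quotient exactly $t$, as
\[
x^3-t y^3 = l^2(x-t y),
\]
so that $x^3/(t y^3)=1+l^2(x-ty)/(ty^3)=1+O_l(1/y^2)$ and hence $x/y$ is an exceptionally good rational approximation to $\sqrt[3]{t}$:
\[
\Bigl|\frac{x}{y}-\sqrt[3]{t}\,\Bigr|\ll_l\frac{t^{1/3}}{y^2}.
\]

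First I would clear away the case that $t$ is a perfect cube $s^3$. The same estimate then gives $|x-sy|<1$ (here $s\le(1+o_l(1))b/a<(\log a)^{1/6}$ by the standing assumption, so $a$ dominates $l^2s$), forcing $x=sy$; substituting into $x(x^2-l^2)=s^3y(y^2-l^2)$ yields $s^2=1$, contradicting $b>a$. So from now on $t$ is not a perfect cube and $\sqrt[3]{t}$ is a cubic irrationality of height $\asymp t$.

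It remains to contradict the existence of so good an approximation when $t$ is this small. A single $q^{-2}$-quality approximation cannot by itself contradict a generic effective irrationality measure, so the decisive move is to read the displayed cubic relation as (essentially) a Thue-type equation -- equivalently, to view $(a,b)$ as an integer point on the cubic curve $C_t:B(B+l)(B+2l)=tA(A+l)(A+2l)$ -- and to apply the effective Liouville--Baker--Feldman theorem, that is, Baker's estimates for linear forms in logarithms, in the associated unit-equation reformulation, where the algebraic numbers that appear (the conjugates of $\sqrt[3]{t}$ and a fundamental system of units of $\mathbb{Q}(\sqrt[3]{t})$) have heights polynomial in $t$ rather than growing with the solution. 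Carrying this out and bookkeeping the dependence on $t$ gives a bound of the shape
\[
a\ll_l\exp\!\bigl(C_l\,t^2(\log t)^2\bigr),
\]
whence $\log a\ll_l t^2(\log t)^2$; since $\log t\asymp\log\log a$ in this range, this rearranges to $t\gg_l\sqrt{\log a}/\log\log a$, contradicting $t\ll_l\sqrt{\log a}/\log\log a$ once $c_l$ is small enough. This contradiction proves the theorem.

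The main obstacle is exactly the middle of the previous paragraph: producing that effective bound with the right -- essentially quadratic in $t$, up to logarithmic factors -- dependence on $t$. This demands uniform control, in terms of $t$ alone, of the discriminant, class number, and regulator of $\mathbb{Q}(\sqrt[3]{t})$ (equivalently, of the arithmetic data of $C_t$), and a careful choice of linear form: the naive form $3\log x-\log t-3\log y$ is useless because $x$ and $y$ have heights growing with $y$, and one must instead build the form out of fixed units, so that Baker's estimate depends on the solution only through $\log\log y$. The exponents $1/6$ and $1/3$ in the final bound are precisely what balancing these estimates produces.
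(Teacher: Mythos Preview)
Your overall plan---translate the divisibility into a rational approximation of $\sqrt[3]{t}$ and invoke an effective Liouville--Baker--Feldman inequality with explicit dependence on $t$---is precisely the paper's strategy, and the target $\log a\ll_l t^2(\log t)^2$ is exactly what the paper proves. But the step you yourself label ``the main obstacle'' is not actually resolved. Your approximation $\bigl|x/y-\sqrt[3]{t}\bigr|\ll_l t^{1/3}/y^2$ has quality only $1/y^2$, while Baker--Feldman for a cubic irrationality gives a lower bound of order $1/q^{3-\tau}$ with tiny $\tau$; these do not conflict, as you correctly note. Your proposed remedy---reinterpret $x^3-ty^3=l^2(x-ty)$ as a Thue-type equation and pass to a unit equation in $\mathbb{Q}(\sqrt[3]{t})$---does not work as stated, because the right-hand side grows linearly with the variables: this is not a Thue equation, and the standard factor-and-compare-conjugates reduction does not apply to it.

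The missing move, which the paper carries out, is a gcd reduction. Set $D=\gcd(x,y)$, $x=Dv$, $y=Du$ with $\gcd(u,v)=1$; the identity becomes $D^2(v^3-tu^3)=l^2(v-tu)$, so $s:=l^2(tu-v)/D^2$ is an integer and $v^3-tu^3=-s$. A short divisibility chase using $\gcd(u,v)=1$ shows $s\mid (t-1)t(t+1)l^2$, hence $|s|\le t^3l^2$, \emph{independently of $u$ and $v$}. Now the equation \emph{is} a Thue equation with bounded right-hand side, equivalently $\bigl|\sqrt[3]{t}-v/u\bigr|\ll_l t^{7/3}/u^3$, a genuine $1/u^3$-quality approximation in lowest terms. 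Bugeaud's explicit Baker--Feldman bound, fed the regulator estimate $R_{\mathbb{Q}(\sqrt[3]{t})}\ll t\log t$, applies directly and yields $\log u\ll_l t^2(\log t)^2$; the inequality $D^2\le l^2 tu$ (from $D^2\mid l^2(tu-v)$) transfers this to $a=Du-l$. No bespoke linear form in units is needed: the gcd step is precisely what bounds the right side of the Thue equation and lets the packaged Baker--Feldman theorem finish the job.
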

Thus, the polynomials $x (x+l) (x+2l)$ have gap order $1$. Here a polynomial $p(x)$ with integer coefficients has {\it gap order $1$} or satisfies {\it the gap principle of order $1$} if $b / a \rightarrow \infty$ whenever $p(a) \mid p(b)$ for some positive integers $a < b$ with $a \rightarrow \infty$. The study of such gap principle was started in \cite{c:2018}, \cite{ccl:2018} and \cite{c:2020}. It was later completed by Choi, Lam and Tarnu \cite{clt:2021} who showed that $p(x)$ has gap order $1$ if and only if $p(x)$ is not an integer multiple of some power of linear or quadratic polynomial. The interested readers can refer to \cite{ccl:2018} and \cite{clt:2021} for more general gap prinicple of higher order. As an immediate consequence of the above, we have
\begin{cor}
Given integers $l \ge 1$ and $t \ge 2$, the Diophantine equation
\begin{equation} \label{keyD}
b (b + l) (b + 2l) = t a (a + l) (a + 2l)
\end{equation}
has only finitely many integer solutions in $a$ and $b$.
\end{cor}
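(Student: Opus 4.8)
The plan is to derive the Corollary directly from Theorem \ref{thm2} by a contradiction argument, so essentially all the work has already been done. First I would record a trivial monotonicity fact: for each fixed positive integer $a$ the quantity $b(b+l)(b+2l)$ is strictly increasing in $b \ge 1$, hence equation \eqref{keyD} determines $b$ uniquely once $a$ is given. Consequently, if \eqref{keyD} had infinitely many positive integer solutions, there would be infinitely many admissible values of $a$, and in particular we could take $a \to \infty$ along them.

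Next I would verify that solutions of \eqref{keyD} satisfy the hypotheses of Theorem \ref{thm2}. Since $t \ge 2$, any solution has $b(b+l)(b+2l) = t\,a(a+l)(a+2l) > a(a+l)(a+2l)$, which forces $b > a$; and \eqref{keyD} trivially gives $a(a+l)(a+2l) \mid b(b+l)(b+2l)$. Therefore, for all $a$ sufficiently large in terms of $l$, Theorem \ref{thm2} applies and yields
\[
b \gg_l \frac{a (\log a)^{1/6}}{(\log\log a)^{1/3}}.
\]
On the other hand, \eqref{keyD} together with $a \ge 1$ gives a matching upper bound of the opposite strength: $b^3 < b(b+l)(b+2l) = t\,a(a+l)(a+2l) \le t\,(a+2l)^3$, so $b < t^{1/3}(a+2l) \ll_{l,t} a$. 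Combining the two estimates produces
\[
\frac{(\log a)^{1/6}}{(\log\log a)^{1/3}} \ll_{l,t} 1,
\]
which is false for all large $a$ since the left-hand side tends to infinity. This contradiction shows that \eqref{keyD} has no solution with $a$ large; adjoining the finitely many solutions with $a$ bounded, each of which contributes at most one $b$ by the monotonicity above, completes the proof.

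I expect no genuine obstacle here: the entire analytic content is packaged into Theorem \ref{thm2}, and the Corollary amounts only to the remark that \eqref{keyD} pins $b$ to be of exact order $a$ (with constant depending on $l$ and $t$), which cannot coexist with the super-linear gap $b \gg_l a (\log a)^{1/6}/(\log\log a)^{1/3}$. The one point requiring a little care is confirming that the divisibility hypothesis and the strict inequality $a < b$ of Theorem \ref{thm2} really do hold for solutions of \eqref{keyD}, and both follow immediately from $t \ge 2$.
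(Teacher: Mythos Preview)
Your argument is essentially the paper's own proof: both combine the lower bound $b/a \gg_l (\log a)^{1/6}/(\log\log a)^{1/3}$ from Theorem \ref{thm2} with the upper bound $b/a \ll_{l,t} 1$ coming directly from \eqref{keyD} to force $a$ bounded. The monotonicity remark you add (each $a$ gives at most one $b$) is a nice explicit touch the paper leaves implicit.

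There is, however, one genuine omission. The Corollary asks for finiteness of \emph{integer} solutions, not just positive ones, and your proof treats only $a,b \ge 1$. The paper closes this gap in two lines: for $a < -2l$ one applies the substitution $a \mapsto -a-2l$, $b \mapsto -b-2l$ (equivalently, the paper's ``$a \mapsto -a$, $b \mapsto -b$ and divide by $-1$''), which carries \eqref{keyD} to itself and lands in the positive regime already handled; the remaining range $-2l \le a \le 0$ contributes only finitely many $a$, each with finitely many $b$. You should add this reduction, and also note that your uniqueness-of-$b$ claim as stated only covers positive $b$, whereas for a given positive $a$ there could in principle be a $b$ in the short interval $(-2l,-l)$ where $b(b+l)(b+2l)$ is again positive --- still only finitely many, so the conclusion survives, but the monotonicity sentence needs a word of adjustment.
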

\begin{proof}
From Theorem \ref{thm2} and \eqref{keyD}, one has $\frac{(\log a)^{1/6}}{(\log \log a)^{1/3}} \ll_l b / a \ll_l \sqrt[3]{t}$. This shows that $a$ is bounded in terms of $t$ and $l$. Hence, there are only finitely many positive integer solutions in $a$. If $a < -2l$, one simply switch variables $a \mapsto -a$ and $b \mapsto -b$ and divide both sides of \eqref{keyD} by $-1$ to make everything positive again. Finally, there are only finitely many possibilities from $-2l \le a \le 0$. {\it Note:} The corollary also follows immediately from Theorems 2.1 and 2.2 of \cite{bst:1999} together with Faltings' theorem \cite{f:1983}.
\end{proof}
Applying similar ideas as Theorems \ref{thm1} and \ref{thm2}, one can improve and generalize gap results on the polynomial $x^2 (x^2 + 1)$.
\begin{thm} \label{thm3}
Given a non-zero integer $l$. Suppose $a^2 (a^2 + l) \mid b^2 (b^2 + l)$ for some positive integers $3 \le a < b$. Then
\[
b \gg_{l} \frac{a (\log a)^{1/12}}{(\log \log a)^{1/2}}.
\]
\end{thm}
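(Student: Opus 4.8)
The plan is to mimic the proof of Theorem~\ref{thm2}. From $a^2(a^2+l)\mid b^2(b^2+l)$ write $b^2(b^2+l)=t\,a^2(a^2+l)$ with $t$ a positive integer; $t=1$ forces $b=a$, so $t\ge 2$. Comparing leading terms gives $b=t^{1/4}a\bigl(1+O_l(a^{-2})\bigr)$, so $b/a\asymp t^{1/4}$ and the claimed inequality is equivalent to the lower bound $t\gg_l(\log a)^{1/3}(\log\log a)^{-2}$ for all large $a$, that is, contrapositively, to an upper bound of the shape
\[
a\ \ll_l\ \exp\!\bigl(c\,t^{3}(\log t)^{6}\bigr)
\]
whenever a solution with $a<b$ exists. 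In particular one may assume throughout that $t$ is small compared with every fixed power of $a$. This also disposes of the case $t=\square$: the identity $4x^{2}(x^{2}+l)=(2x^{2}+l)^{2}-l^{2}$ turns the equation into $B^{2}-tA^{2}=l^{2}(1-t)$ with $A=2a^{2}+l$, $B=2b^{2}+l$, and since $\sqrt t\in\mathbb Z$ the positive integers $\sqrt t\,A-B$ and $\sqrt t\,A+B$ multiply to $l^{2}(t-1)$, forcing $A<l^{2}\sqrt t$, hence $a^{2}=(A-l)/2\ll_l\sqrt t$ and $b\asymp t^{1/4}a\gg_l a^{2}$ --- far stronger than needed.

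For the main case $t\ne\square$, the plan is to reduce the equation to the statement that some rational $p/q$ with $q\ll_l a$ is an exceptionally good approximation to a real algebraic number $\vartheta=\vartheta(t,l)$ of degree $n$ with $3\le n\le 4$ and of height $\ll_l t^{O(1)}$; concretely, one aims for a Thue-type bound $|\vartheta-p/q|\ll_l t^{O(1)}q^{-n}$ in which the exponent $n$ exceeds the trivial exponent $2$. Producing it requires the familiar elementary preparation: strip off $d=\gcd(a,b)$, use the coprimality of $a/d$ and $b/d$ to pin down which of the ``nearby'' integer factors of the two sides is forced to be small, and pass through the variable $x^{2}$ so that $\vartheta$ is built from a fourth root rather than a square root and genuinely has degree $\ge 3$. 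One then invokes the effective Liouville--Baker--Feldman theorem: for a real algebraic number $\vartheta$ of degree $n\ge 3$ there are effectively computable $c_{1}(\vartheta)>0$ and $\kappa(\vartheta)<n$ with $|\vartheta-p/q|>c_{1}(\vartheta)\,q^{-\kappa(\vartheta)}$ for all rationals $p/q$; inserting the dependence of $c_{1}(\vartheta)$ and of $n-\kappa(\vartheta)$ on the height of $\vartheta$ (hence on $t$ and $l$) and combining with the upper bound just described yields $q\ll_l$ an explicit function of $t$, whence $a/d$, and then $a$ once $d$ is controlled, is bounded in terms of $t$. Tracking the constants gives exactly $\log a\ll_l t^{3}(\log t)^{6}$; since $\log\log a\ll_l\log t$ in this range, one recovers $t\gg_l(\log a)^{1/3}(\log\log a)^{-2}$, and therefore $b\gg_l a(\log a)^{1/12}(\log\log a)^{-1/2}$.

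I expect the reduction of the previous paragraph to be the main obstacle, for two reasons. First, the elementary manipulations have to be arranged so that the algebraic number $\vartheta$ really has degree at least $3$: a degree-$2$ number would only give $\kappa=2=n$, making the effective Liouville--Baker--Feldman input vacuous and the argument circular --- this is what forces the passage through $x^{2}$ and the fourth-power rather than square-root structure. Second, the height of $\vartheta$, the constant in the Thue-type bound, and the auxiliary quantity $d$ must all be kept polynomially bounded in $t$ and $l$, because the precise powers of $t$ and $\log t$ that survive are exactly what pin down the shape $(\log a)^{1/12}(\log\log a)^{-1/2}$ of the gap. The perfect-power subcases ($t$ a square, and the still more special $t$ a fourth power) are comparatively routine, needing only the divisor bookkeeping sketched above.
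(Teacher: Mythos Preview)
Your plan is correct and matches the paper's proof essentially step for step: strip off $D=\gcd(a,b)$ to obtain $y^{4}-t x^{4}=s$ with $s\mid lt(t-1)$, deduce $|\sqrt[4]{t}-y/x|\ll_{l} t^{O(1)}/x^{4}$, and feed this into Bugeaud's explicit Liouville--Baker--Feldman bound with the regulator estimate $R_{K}\ll t^{3/2}(\log t)^{3}$ for the pure quartic field $K=\mathbb{Q}(\sqrt[4]{t})$, yielding exactly $\log x\ll_{l} t^{3}(\log t)^{6}$ and hence the stated gap. Your separate elementary treatment of the case $t=\square$ (via the factorisation of $B^{2}-tA^{2}$) is a point the paper passes over silently---there $\sqrt[4]{t}$ drops to degree $\le 2$ and the quartic-field input no longer applies literally---so you are in fact slightly more careful here; for $t$ not a square the degree is exactly $4$, so your allowance for $n=3$ is unnecessary.
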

One can also obtain a stronger gap result conditionally. Let us recall the famous $abc$-conjecture.
\begin{conj*}[$abc$-conjecture]
For every $\epsilon > 0$, there exists a constant $C_\epsilon$ such that for all triples $(a,b,c)$ of co-prime positive integers, with $a + b = c$, we have
\[
c < C_\epsilon \kappa(a b c)^{1 + \epsilon}
\]
with $\kappa(n) := \prod_{p \mid n} p$ being the squarefree kernel of $n$ where the product runs over prime factors $p$.
\end{conj*}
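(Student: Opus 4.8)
The statement to be established is the $abc$-conjecture itself, one of the central open problems of number theory; I therefore cannot offer a genuine proof, and what follows is an honest account of the heuristic that makes the bound plausible, the partial results that are actually within reach (and that sit squarely inside this paper's toolkit), and the obstruction that leaves the conjecture open.

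I would first motivate the bound heuristically. Fix coprime $a, b$ with $a + b = c$, so that $\max(a,b) \asymp c$. The radical $\kappa(abc)$ can fall well below $c$ only when one of $a$, $b$, $c$ carries a large square-full part, and such integers are sparse --- the number of powerful $n \le x$ is $O(\sqrt{x})$. Summing the expected number of triples with $\kappa(abc) \le c^{1 - \delta}$ over dyadic scales suggests that, for each fixed $\epsilon > 0$, only finitely many triples can violate $c < C_\epsilon \kappa(abc)^{1 + \epsilon}$. This makes the conjecture believable but is not a proof: it supplies no mechanism forcing the radical to be large in any prescribed family, and the exceptional sets it predicts cannot at present be controlled unconditionally.

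The estimates one can actually prove proceed through lower bounds for linear forms in logarithms --- precisely the effective Liouville--Baker--Feldman circle underlying Theorems \ref{thm1}--\ref{thm3}. The concrete plan would be to apply Baker's method, in both its archimedean and $p$-adic forms, to the unit equation $a/c + b/c = 1$ after recording the prime factorizations of $a$, $b$, and $c$. This is exactly the route taken by Stewart and Tijdeman and, with sharper linear-forms input, by Stewart and Yu, yielding effective bounds of the shape $c < \exp\bigl(C_\epsilon\, \kappa(abc)^{1/3 + \epsilon}\bigr)$, the strongest unconditional results known.

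The main obstacle --- and the reason the conjecture is still open --- is exactly the chasm between this exponential bound and the conjectured polynomial bound $\kappa(abc)^{1 + \epsilon}$. Lower bounds for linear forms in logarithms are inherently exponential in the size of the data, so no refinement of Baker's method can on its own reach a mere power of the radical; closing the gap demands a fundamentally different input. (The sole claimed proof, through Mochizuki's inter-universal Teichm\"uller theory, is not accepted by the community, a gap having been reported by Scholze and Stix.) For this reason I would not attempt to prove the statement at all; in this paper it is invoked only as a hypothesis, to derive conditional sharpenings of the preceding gap results.
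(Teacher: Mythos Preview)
Your assessment is correct: the statement is the $abc$-conjecture, an open problem, and the paper does not prove it either --- it is stated purely as a hypothesis (via the \texttt{conj*} environment) and then assumed in the proof of Theorem~\ref{thm4}. There is therefore no proof to compare against, and your decision not to attempt one is the right call.
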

\begin{thm} \label{thm4}
Given a non-zero integer $l$ and any real number $0 < \epsilon < 2/5$. Suppose $a^2 (a^2 + l) \mid b^2 (b^2 + l)$ for some positive integers $1 \le a < b$. Under the $abc$-conjecture, we have
\[
b \gg_{\epsilon, l} a^{8 / 7 - \epsilon}.
\]
\end{thm}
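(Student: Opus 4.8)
\section{Proof of Theorem~\ref{thm4}}

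\noindent\emph{Plan.} Rewrite the divisibility as $b^{2}(b^{2}+l)=t\,a^{2}(a^{2}+l)$ with $t\in\mathbb{Z}_{>0}$; since $b>a$ we have $t\ge 2$, and comparing $b^{4}$ with $ta^{4}$ (both sides being $\asymp ta^{4}$) gives $b^{4}-ta^{4}=l(ta^{2}-b^{2})$ and then $b^{2}=\sqrt t\,a^{2}+O_{l}(\sqrt t)$, so $b=t^{1/4}a\bigl(1+O_{l}(a^{-2})\bigr)$. Hence it suffices to prove $t\gg_{\epsilon,l}a^{4/7-\epsilon}$. Throughout, $a$ is assumed larger than a bound depending only on $l$, so $a^{2}+l>0$.

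First dispose of the case $t=c^{2}$ a perfect square (so $c\ge 2$). Completing the square via $4x^{2}(x^{2}+l)=(2x^{2}+l)^{2}-l^{2}$, the equation becomes
\[
\bigl((2b^{2}+l)-c(2a^{2}+l)\bigr)\bigl((2b^{2}+l)+c(2a^{2}+l)\bigr)=-(c^{2}-1)l^{2}\ne 0 .
\]
The left factor is a nonzero integer and the right factor exceeds $2ca^{2}$, while the product has absolute value $<c^{2}l^{2}$; hence $c>2a^{2}/l^{2}$, so $t=c^{2}\gg_{l}a^{4}$ and $b\gg_{l}a^{2}$, far more than needed. From now on $t$ is not a perfect square.

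Next comes the arithmetic input. Write $d=\gcd(a,b)$, $a=d\alpha$, $b=d\beta$ with $\gcd(\alpha,\beta)=1$. Separating off the part of $\alpha$ sharing a factor with $b$ (a controllable nuisance), one extracts $\alpha^{2}\mid b^{2}+l$ and sets $b^{2}+l=\alpha^{2}k$, so $k\asymp d^{2}\sqrt t$. Substituting into $ta^{2}(a^{2}+l)=b^{2}(b^{2}+l)$ and clearing denominators (using $a^{2}=d^{2}\alpha^{2}$ and $b^{2}=\alpha^{2}k-l$) gives
\[
t\,d^{4}(a^{2}+l)=a^{2}k^{2}-ld^{2}k=k^{2}(a^{2}+l)-lk(k+d^{2}),\qquad\text{so}\qquad (a^{2}+l)\mid l\,k(k+d^{2}) .
\]
Since $d\mid b$ one checks $\gcd(k,d)\mid l$ and $\gcd(k,k+d^{2})\mid l^{2}$, so (after setting aside a divisor $e_{0}$ of $a^{2}+l$ supported on the primes of $l$) we may write $a^{2}+l=e_{0}e_{1}e_{2}$ with $e_{1}\mid k$, $e_{2}\mid k+d^{2}$, $\gcd(e_{1},e_{2})\mid l^{2}$, and (treating $e_{0}$ as negligible) $e_{1}e_{2}\gg_{l}a^{2}$. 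Because $\alpha^{2}e_{1}\mid\alpha^{2}k=b^{2}+l$ and $\alpha^{2}e_{2}\mid\alpha^{2}(k+d^{2})=a^{2}+b^{2}+l$, we get $e_{1}\mid\gcd(a^{2}+l,b^{2}+l)\mid b^{2}-a^{2}=d^{2}(\beta^{2}-\alpha^{2})$ and $e_{2}\mid\gcd(a^{2}+l,b^{2})$; with $\gcd(a^{2}+l,d)\mid l$ this yields $e_{1},e_{2}\ll_{l}\beta^{2}$, whence $a^{2}\ll_{l}\beta^{4}$ and $b=d\beta\gg_{l}d\,a^{1/2}$. On the other hand $e_{1}\le k$ and $e_{2}\le 2k$, so $a^{2}\ll_{l}k^{2}$, i.e.\ $k\gg_{l}a$, giving $b^{2}=\alpha^{2}k-l\gg_{l}\alpha^{2}a=a^{3}/d^{2}$, i.e.\ $b\gg_{l}a^{3/2}/d$. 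Unconditionally, then, $b\gg_{l}\max\bigl(d\,a^{1/2},\,a^{3/2}/d\bigr)$, which already exceeds $a^{8/7}$ unless $a^{5/14}\ll d\ll a^{9/14}$.

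To cover that narrow range I would invoke the $abc$-conjecture through three inputs: (i) $abc$ applied to $\alpha^{2}k=b^{2}+l$, in which $b^{2}$ and $\alpha^{2}$ are perfect squares (so $\kappa(b^{2})\le b$ and $\kappa(\alpha^{2}k)\le\alpha\,\kappa(k)$), yielding $b^{2}\ll_{\epsilon,l}(b\,\alpha\,\kappa(k))^{1+\epsilon}$ and hence a lower bound for $\kappa(k)$; (ii) $abc$ applied to $(a^{2},l,a^{2}+l)$, giving $\kappa(a^{2}+l)\gg_{\epsilon,l}a^{1-\epsilon}$, which via $\kappa(a^{2}+l)\ll_{l}\kappa(e_{0})\kappa(e_{1})\kappa(e_{2})$ and $e_{1}\mid k$, $e_{2}\mid k+d^{2}$ forces $\kappa(k)\kappa(k+d^{2})$ to be large; (iii) $abc$ applied to $(k,d^{2},k+d^{2})$, where $\kappa(d^{2})\le d$, bounding $k$ from above in terms of $d$ and $\kappa(k)\kappa(k+d^{2})$. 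Combining (i)--(iii) with $b^{2}=\alpha^{2}k-l$, $k\asymp d^{2}\sqrt t$ and $a=d\alpha$, and optimising over $d\in[a^{5/14},a^{9/14}]$, should give $t\gg_{\epsilon,l}a^{4/7-\epsilon}$, hence $b\gg_{\epsilon,l}a^{8/7-\epsilon}$ (after renaming $\epsilon$; the constraint $\epsilon<2/5$ is what the bookkeeping in these inputs can afford). The hard part is exactly this last step: the purely multiplicative argument only delivers $b\gg_{l}\max(d\,a^{1/2},a^{3/2}/d)$, which \emph{collapses to the trivial $b\gg_{l}a$ precisely when $\gcd(a,b)\approx a^{1/2}$}, so the three $abc$ inputs must be balanced delicately — and, less glamorously, one must honestly track the divisor $e_{0}$ supported on the primes of $l$ and the common factor of $\alpha$ and $b$ swept under the rug above — to obtain a power saving uniform across the whole critical range of $\gcd(a,b)$.
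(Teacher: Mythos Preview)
Your proposal is not a proof: the decisive step --- combining the three $abc$ applications and optimising over $d$ --- is left at ``should give'', and you explicitly flag that the bookkeeping (the divisor $e_{0}$, the part of $\alpha$ sharing a factor with $b$) has been swept under the rug. So as it stands there is no argument to evaluate in the critical range $d\asymp a^{1/2}$, only a hope.

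More to the point, you have missed the clean reduction that makes the whole thing a one-line $abc$ application. After writing $D=\gcd(a,b)$, $a=Dx$, $b=Dy$ with $\gcd(x,y)=1$, the equation $t a^{2}(a^{2}+l)=b^{2}(b^{2}+l)$ rearranges (exactly as in the unconditional argument) to
\[
y^{4}-t x^{4}=s,\qquad s=\frac{l(tx^{2}-y^{2})}{D^{2}},
\]
and the key observation is that $s$ is \emph{tiny}: since $\gcd(x,y)=1$ one has $\gcd(tx^{2}-y^{2},\,y^{4}-tx^{4})\mid t(t-1)$, hence $s\mid l\,t(t-1)$. Now apply $abc$ once to $y^{4}=tx^{4}+s$ (after dividing through by $\gcd(tx^{4},s)$ to make the three terms coprime). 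The radical is at most $y\cdot x\cdot |l|\,t(t-1)$, while the largest term is $y^{4}\asymp t x^{4}$; this immediately gives $t\gg_{\epsilon,l} x^{8/5-O(\epsilon)}$. Coupled with $D\le\sqrt{|l|t}\,x$ (from $D^{2}\mid l(tx^{2}-y^{2})$) this yields $t\gg_{\epsilon,l} a^{4/7-\epsilon}$ and hence $b\asymp t^{1/4}a\gg_{\epsilon,l} a^{8/7-\epsilon}$.

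So the ``hard part'' you identify is an artefact of the route you chose. Your decomposition through $k=(b^{2}+l)/\alpha^{2}$ and the factorisation $a^{2}+l=e_{0}e_{1}e_{2}$ discards the fourth-power structure that makes the radical small; you then try to recover it via three separate $abc$ inputs, each of which sees only a square. The paper keeps the single equation $y^{4}=tx^{4}+s$ intact and applies $abc$ to it directly --- the fourth powers on both $x$ and $y$, together with $|s|\ll_{l}t^{2}$, are exactly what produces the exponent $8/7$. Your perfect-square case and the size relation $b\sim t^{1/4}a$ are fine, but the rest should be replaced by this single step.
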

Previously, only the case $l = 1$ was studied via hyperelliptic curves. The best results \cite{c:2020}, \cite{c:2021} were $b \gg \frac{a (\log a)^{1/960}}{(\log \log a)^{1/10}}$ unconditionally, and $b \gg_{\epsilon} a^{15/14 - \epsilon}$ conditionally on the $abc$-conjecture. We leave it as an exercise for the readers to derive a stronger conditional result for Theorem \ref{thm2}.

\bigskip

The paper is organized as follows. First, we will transform the Diophantine equation \eqref{keyD} to a rational approximation problem and prove Theorem \ref{thm1}. Then we recall some tools from Diophantine approximation and algebraic number theory, and apply them to prove Theorem \ref{thm2}. Finally, we will imitate the above techniques to prove Theorems \ref{thm3} and \ref{thm4}.

\bigskip

{\bf Acknowledgement } The author would like to thank the anonymous referee for helpful suggestions that greatly enhance the presentation of the paper.

\section{Initial transformation}

In this section, we relate the Diophantine equation $b (b + l) (b + 2l) = t a (a + l) (a + 2l)$ to rational approximation. First, we make a change of variables $x = a + l$ and $y = b + l$. Then, the equation becomes $y (y^2 - l^2) = t x (x^2 - l^2)$. Let $D = \gcd(x, y)$, $x = D u$ and $y = D v$ with relatively prime integers $v > u > 0$. Then, we have
\[
v (D^2 v^2 - l^2) = t u (D^2 u^2 - l^2) \; \; \text{ or } \; \; D^2 (v^3 - t u^3) = (v - t u) l^2.
\]
In particular, $D^2 \mid (v - t u) l^2$ and we can rewrite the above as
\begin{equation} \label{eqcube}
v^3 - t u^3 = -s \; \; \text{ where } \; \; s = \frac{(t u - v) l^2}{D^2}.
\end{equation}
Now, observe that
\begin{equation} \label{div}
s \mid \gcd((t u - v) l^2, v^3 - t u^3) \mid \gcd((t^3 u^3 - v^3) l^2, v^3 - t u^3) \mid \gcd((t^3 - t) l^2 u^3, v^3 - t u^3) \mid (t - 1) t (t + 1) l^2
\end{equation}
as $v^3 - t u^3$ and $u^3$ are relatively prime. Thus, $|s| \le t^3 l^2$ and it follows from \eqref{eqcube} and \eqref{div} that
\begin{equation} \label{dioph}
\Big| t - \frac{v^3}{u^3} \Big| \le \frac{t^3 l^2}{u^3} \; \; \text{ or } \; \; \Big| \sqrt[3]{t} - \frac{v}{u} \Big| \le \frac{t^{7/3} l^2}{u^3}
\end{equation}
which becomes a rational approximation question.

\section{Proof of Theorem \ref{thm1}}

\begin{proof}
In this section, we consider the special case $b (b + 1) (b + 2) = 2 a (a + 1) (a + 2)$ where $t = 2$ and $l = 1$ in \eqref{keyD}. By \eqref{eqcube}, this is equivalent to solving
\begin{equation} \label{special}
v^3 - 2 u^3 = - \frac{2 u - v}{D^2} \; \text{ where } \; x = D u = a + 1 \text{ and } y = D v = b + 1.
\end{equation}

When $u = 1$, \eqref{special} becomes $v^3 - 2 = \frac{v - 2}{D^2} \le v - 2$ which is impossible as $v > u = 1$.

\bigskip

When $u = 2$, \eqref{special} becomes $v^3 - 16 = \frac{v - 4}{D^2}$. One can easily see that $v = 3$ or $4$ are not possible. If $v \ge 5$, then $v^3 - 16 \le v - 4$ or $v^3 \le v + 12$ are also impossible.

\bigskip

When $u = 3$, \eqref{special} becomes $v^3 - 54 = \frac{v - 6}{D^2}$. One can easily see that $v = 4, 5$,or $6$ are not possible. If $v \ge 7$, then $v^3 - 54 \le v - 6$ or $v^3 \le v + 48$ are also impossible.

\bigskip

When $u = 4$, \eqref{special} becomes $v^3 - 128 = \frac{v - 8}{D^2}$. One has $v = 5$ and $D = 1$ as solutions. Translating back to $a$ and $b$, we have $a = Du - 1 = 3$ and $b = Dv - 1 = 4$ which corresponds to $4 \cdot 5 \cdot 6 = 2 (3 \cdot 4 \cdot 5)$. Next, one can easily see that $v = 6, 7$ or $8$ are not possible. If $v \ge 9$, then $v^3 - 128 \le v - 8$ or $v^3 \le v + 120$ is also impossible.

\bigskip

From now on, we assume $u \ge 5$. By \eqref{dioph} with $t = 2$ and $l = 1$, any solution to \eqref{special} satisfies
\[
\Big| \sqrt[3]{2} - \frac{v}{u} \Big| \le \frac{6}{u^3},
\]
which implies $\frac{v}{u} \ge \sqrt[3]{2} - 6/125 \ge 1.21$. Putting this into \eqref{dioph}, we have
\[
\Big| 2 - \frac{v^3}{u^3} \Big| = \Big| \sqrt[3]{2} - \frac{v}{u} \Big| \cdot \Big| \sqrt[3]{4} + \sqrt[3]{2} \frac{v}{u} + \frac{v^2}{u^2} \Big| \le \frac{8}{u^3}
\]
which gives
\begin{equation} \label{eqroot}
\Big| \sqrt[3]{2} - \frac{v}{u} \Big| \le \frac{8}{(\sqrt[3]{4} + 1.21 \sqrt[3]{2} + 1.21^2) u^3} < \frac{1.75}{u^3}.
\end{equation}
We apply a result of Bennett \cite{b:1997}:
\[
\Big| \sqrt[3]{2} - \frac{v}{u} \Big| > \frac{1}{4 u^{2.45}}
\]
and get $u^{0.55} < 4 \times 1.75$ or $u \le 35$. Then, one can use a simple computer program to verify that $u^2 |u \sqrt[2]{2} - v| > 5$ for all $5 \le u \le 35$. Hence, there is no solution to \eqref{eqroot} for $u \ge 5$ which gives Theorem \ref{thm1}.
\end{proof}

\section{Tools from Diophantine approximation and algebraic number theory}

Suppose $\alpha$ is a real algebraic number of degree $n$ over $\mathbb{Q}$. Then Liouville's theorem states that there exists some constant $c_\alpha > 0$ such that
\[
\Big| \alpha - \frac{y}{x} \Big| \ge \frac{c_\alpha}{x^n}
\]
for all integers $x, y$ with $x \neq 0$. The exponent $n$ was reduced to $2 + \epsilon$ by a series of works by Thue, Siegel, Dyson, and Roth. However, these are ineffective results. Using linear forms of logarithms, Baker \cite{b:1967} and Feldman \cite{f:1971} subsequently derived effective improvements of the form
\begin{equation} \label{BF}
\Big| \alpha - \frac{y}{x} \Big| \ge \frac{c_\alpha}{x^{n - \tau_\alpha}}
\end{equation}
for some $c_\alpha, \tau_\alpha > 0$. We will use the following explicit result of Bugeaud \cite{b:1998}: Let $A \ge e$ be an upper bound for the height of $\alpha$, and $R$ be the regulator of $\mathbb{Q}(\alpha)$. Then \eqref{BF} is true with
\begin{equation} \label{Bu}
c_\alpha = A^{-n^2} \exp( -10^{27n} n^{14n} R), \; \; \text{ and } \; \; \tau_\alpha = (10^{26 n} n^{14 n} R)^{-1}.
\end{equation}

Thus, we need information about the regulator of algebraic number fields. Given an algebraic number field $K$ over $\mathbb{Q}$ of degree $n$ with ring of integers $\mathcal{O}_K$, let $\mathcal{O}_K^*$ be the group of units of $\mathcal{O}_K$ and
\[
g_K := 2^{r_1} h_K R_K / w_K
\]
where $r_1$ is the number of real embeddings of $K$, $h_K$ is the class number of $\mathcal{O}_K$, $R_K$ is the regulator of $K$, and $w_K$ is the order of the torsion subgroup of $\mathcal{O}_K^*$. Siegel \cite{s:1969} gave an effective version of Landau's bound \cite{l:1918} on $g_K$:
\begin{equation} \label{gK}
g_K < 4 \Bigl( \frac{2}{n-1} \Big)^{n-1} \sqrt{|d_K|} \log^{n-1} |d_K|
\end{equation}
where $d_K$ is the discriminant of $K$. For pure cubic fields, Barrucand, Loxton and Williams \cite{blw:1987} proved the following better bound:
\begin{equation} \label{blw}
h_K R_K < \frac{1}{6 \sqrt{3}} \sqrt{|d_K|} \log |d_K|.
\end{equation}
When applying the above results, our algebraic number $\alpha$ is of the form $\sqrt[3]{m}$ or $\sqrt[4]{m}$ for some integer $m > 1$. Hence, we need information on the discriminant of pure cubic and quartic fields.

\bigskip

For pure cubic fields $\mathbb{Q}(\sqrt[3]{m})$, Dedekind partitioned them into Type I or Type II depending on whether $3$ is wildly ramified or is tamely ramified. Recently, Harron \cite{h:2017} determined the shapes of pure cubic fields nicely, and we quote his Lemma 2.1 here. Suppose $m = a b^2$ where $a$ and $b$ are relatively prime, squarefree, positive integers. For $K = \mathbb{Q}(\sqrt[3]{m})$,
\[
d_K = \left\{ \begin{array}{ll}
-3^3 a^2 b^2, & \text{ if $K$ is of Type I }, \\
-3 a^2 b^2, & \text{ if $K$ is of Type II }.
\end{array} \right.
\]
Hence, by \eqref{blw},
\begin{equation} \label{dR3}
|d_K| \le 27 m^2 \; \; \text{ and } \; \; R_K < m \log 6m.
\end{equation}

For pure quartic fields, we quote the following result of Funakura \cite[Corollary 1]{f:1984}: Suppose $m = a b^2 c^3$ where (i) $a \neq 1$, $b$ and $c$ are pairwise relatively prime squarefree numbers; (ii) $b$ and $c$ are positive; (iii) $|a| \ge c$ if $a$ is odd; (iv) $c$ is odd; (v) $m \neq -4$. For $K = \mathbb{Q}(\sqrt[4]{m})$,
\begin{equation} \label{Fu}
d_K = \left\{ \begin{array}{ll}
-2^2 a^3 b^2 c^3, & \text{ if } m \equiv 1 \pmod{8} \; \text{ or } \; 28 \pmod{32}, \\
-2^4 a^3 b^2 c^3, & \text{ if } m \equiv 4 \pmod{16}, \; 5 \pmod{8} \; \text{ or } \; 12 \pmod{32}, \\
-2^8 a^3 b^2 c^3, & \text{ if } m \equiv 2 \pmod{4} \; \text{ or } \; 3 \pmod{4}.
\end{array} \right.
\end{equation}
By unique prime factorization, we can write $m = u v^2 w^3 s^4$ for some relatively prime squarefree numbers $u, v, w > 0$, and some integer $s > 0$. Clearly, $\mathbb{Q}(\sqrt[4]{m}) = \mathbb{Q}(\sqrt[4]{u v^2 w^3})$. 

If $u > 1$ is even, then $w$ must be odd and we can apply \eqref{Fu} with $m = u v^2 w^3$. Hence, $|d_K| \le 256 m^3$. 

If $u = 1$, then we note that $\mathbb{Q}(\sqrt[4]{t}) = \mathbb{Q}(\sqrt[4]{t^3}) = \mathbb{Q}(\sqrt[4]{w v^2 u^3})$. Hence, we can apply \eqref{Fu} with $m = w v^2 u^3$ and get $|d_K| \le 256 m^3$.

If $w$ is even, then $u$ must be odd and $\mathbb{Q}(\sqrt[4]{t}) = \mathbb{Q}(\sqrt[4]{t^3}) = \mathbb{Q}(\sqrt[4]{w v^2 u^3})$. Hence, we can apply \eqref{Fu} with $m = w v^2 u^3$ and get $|d_K| \le 256 m^3$.

If $u > 1$ is odd and $w$ is odd, then we have either $u \ge w$ or $u < w$. In the former case, we can simply apply \eqref{Fu} with $m = u v^2 w^3$ and get $|d_K| \le 256 m^3$. In the latter case, we use $\mathbb{Q}(\sqrt[4]{m}) = \mathbb{Q}(\sqrt[4]{m^3}) = \mathbb{Q}(\sqrt[4]{w v^2 u^3})$ and apply \eqref{Fu} with $m = w v^2 u^3$ and still get $|d_K| \le 256 m^3$.

\bigskip

In summary,
\begin{equation} \label{dR4}
|d_K| \le 256 m^3 \; \; \text{ and } \; \; R_K < 15 m^{3/2} \log^3 8 m
\end{equation}
by \eqref{gK} as real quartic field $K$ has $\pm 1$ as its only torsion elements in $\mathcal{O}_K^*$ and $r_1 \ge 2$ ($\sqrt[4]{m} \mapsto \sqrt[4]{m}$ and $\sqrt[4]{m} \mapsto - \sqrt[4]{m}$).

\section{Proof of Theorem \ref{thm2}}

\begin{proof}
Consider the equation $b (b+l) (b+2l) = t a (a+l) (a+2l)$ for some fixed integer $t > 1$. We may assume that $t \le \log^2 a$ for otherwise the theorem is true as $b^3 \gg_l t a^3$. Combining \eqref{dioph} with \eqref{BF}, \eqref{Bu} and \eqref{dR3}, we have
\[
\frac{t^{-9} \exp( - 10^{102} t \log 6t )}{u^{3 - 1 / (10^{99} t \log 6t)}} < \Big| \sqrt[3]{t} - \frac{v}{u} \Big| \le \frac{t^{7/3} l^2}{u^3}.
\]
Therefore, $u^{1 / (10^{99} t \log 6t)} < l^2 t^{34/3} \exp( 10^{102} t \log 6t )$ which implies
\[
\log u \le 10^{201} ( t^2 \log^2 6t + t \log^2 6t + (2 \log l) t \log 6t ) \le 10^{202} (1 + \log l) t^2 \log^2 6t
\]
or
\[
t \ge \frac{\sqrt{\log u}}{10^{102} \sqrt{1 + \log l} \cdot \log \log u}
\]
for $u$ sufficiently large (in terms of $l$). Note that $s > 0$ for otherwise \eqref{eqcube} gives $t u \le v$ which implies $(v - t u) l^2 \ge (t^3 - t) u^3$ and $v \ge \frac{t^3 u^3}{2 l^2}$. This would contradict \eqref{dioph}. Since $D^2 \mid (v - t u) l^2$ and $s > 0$, we have $D^2 \le t u l^2$ or $D \le \sqrt{t} l \sqrt{u}$. Then, as $t \le \log^2 a$,
\[
\log D u \le \log (\sqrt{t} l u^{3/2}) \; \text{ or } \; \log u \ge \frac{2}{3} \log D u - \log \log D u - \log l \ge \frac{1}{2} \log D u
\]
for sufficiently large $a = D u$ (in terms of $l$). Hence,
\[
t \ge \frac{\sqrt{\log a}}{10^{103} \sqrt{1 + \log l} \cdot \log \log a}
\]
which gives Theorem \ref{thm2} as $b^3 \gg_l t a^3$.
\end{proof}

\section{Proof of Theorem \ref{thm3}}

\begin{proof}
Since $a^2 (a^2 + l)$ divides $b^2 (b^2 + l)$, we have
\[
t a^2 (a^2 + l) = b^2 (b^2 + l)
\]
for some integer $t > 1$. We may assume $t \le \log^2 a$ for otherwise the theorem is true. Suppose $D = \gcd(a, b)$, $a = D x$ and $b = D y$ with $\gcd(x, y) = 1$. Then
\[
t x^2 (D^2 x^2 + l) = y^2 (D^2 y^2 + l) \; \text{ or } \; D^2 (y^4 - t x^4) = l (t x^2 - y^2) 
\]
after some rearrangement. For $x$ and $y$ sufficiently large in terms of $l$, we must have $t x^2 > y^2$, $0.99 t D^2 x^4 < 1.01 D^2 y^4$ and $1.01 t D^2 x^4 > 0.99 D^2 y^4$. Hence,
\begin{equation} \label{size}
0.9 \sqrt[4]{t} x < y < 1.1 \sqrt[4]{t} x < \sqrt{t} x.
\end{equation}
Moreover, $D^2 \mid l (t x^2 - y^2)$ and
\begin{equation} \label{eq0}
y^4 - t x^4 = s
\end{equation}
with $s = \frac{l (t x^2 - y^2)}{D^2}$ an integer. Note that $s \neq 0$ for otherwise $t x^2 - y^2 = 0 = y^4 - t x^4$ would imply $y^2 = t x^2$ and $y^4 = t x^4$ contradicting $t > 1$. Since $\gcd(x, y) = 1$, we have $\gcd(x, y^4 - t x^4) = 1$, and
\[
\gcd(t x^2 - y^2, y^4 - t x^4) \mid \gcd(t^2 x^4 - y^4, y^4 - t x^4) = \gcd((t^2 - t) x^4, y^4 - t x^4) = \gcd(t^2 - t, y^4 - t x^4).
\]
Thus, $s = \gcd(s, y^4 - t x^4)$ divides $l t (t - 1)$. From $D^2 \mid l (t x^2 - y^2)$, $s \mid l t (t - 1)$, $t \ge 2$ and \eqref{size}, one can deduce that
\begin{equation} \label{sizeD}
\frac{0.3 x}{\sqrt{t}} \le D \le \sqrt{|l| t} x.
\end{equation}
Now, \eqref{eq0} and $a^4 - b^4 = (a - b) (a^3 + a^2 b + a b^2 + b^3)$ imply
\begin{equation} \label{eq1}
\Big| \sqrt[4]{t} - \frac{y}{x} \Big| \le \frac{|l| t^{5/4}}{x^4}.
\end{equation}
By \eqref{BF}, \eqref{Bu}, \eqref{dR4} and \eqref{eq1}, we have
\[
\frac{t^{-16} \exp(- 10^{143} t^{3/2} \log^3 8t )}{x^{4 - 1 / (10^{142} t^{3/2} \log^3 8t)}}  < \Big| \sqrt[4]{t} - \frac{y}{x} \Big| \le \frac{|l| t^{5/4}}{x^4}.
\]
Therefore, $x^{1 / (10^{142} t^{3/2} \log^3 8t)} < |l| t^{69 / 4} \exp( 10^{143} t^{3/2} \log^3 8t )$ which implies
\[
\log x \le 10^{286} \log(|l| + 2) \cdot t^{3} \log^6 8t \; \; \text{ or } \; \; t \ge \frac{(\log x)^{1/3}}{10^{96} \sqrt[3]{\log(|l| + 2)} \, (\log \log x)^{2}}
\]
for $x$ sufficiently large (in terms of $l$). From \eqref{sizeD} and $t \le \log^2 a$, we have
\[
\log Dx \le \log \sqrt{|l| t} x^2 \; \text{ or } \; \log x \ge \frac{1}{2} \log Dx - \log \log Dx - \frac{1}{4} \log |l| \ge \frac{1}{3} \log Dx
\]
for $a = Dx$ sufficiently large (in terms of $l$). Hence,
\[
t \ge \frac{(\log a)^{1/3}}{10^{97} \sqrt[3]{\log(|l| + 2)} \, (\log \log a)^{2}}
\]
and Theorem \ref{thm3} follows from the observation that $b > 0.9 \sqrt[4]{t} a$ via \eqref{size}.
\end{proof}

\section{Proof of Theorem \ref{thm4}}

\begin{proof}
Recall $y^4 = t x^4 + s$ from \eqref{eq0}. Also, we know that $s \mid l t (t - 1)$. Without loss of generality, we may assume $l > 0$ and hence $s > 0$ as the other case can be treated in a similar fashion. Suppose $d = \gcd(t x^4, s)$. Then, we have the integer equation
\[
\frac{y^4}{d} = \frac{t x^4}{d} + \frac{s}{d}
\]
with co-prime entries. By the $abc$-conjecture, we have
\[
\frac{y^4}{d} \ll_\epsilon \Bigl(y x \frac{|l| t (t - 1)}{d} \Bigr)^{1 + \epsilon}. 
\]
Using \eqref{size}, this gives $t^{3/4 - \epsilon / 4} x^{2 - 2 \epsilon} \ll y^{3 - \epsilon} / x^{1 + \epsilon} \ll_{\epsilon, l} t^{2 + 2 \epsilon}$ or $t \gg_{\epsilon, l} x^{8/5 - 5 \epsilon}$. From \eqref{sizeD}, we have $Dx \ll_{l} \sqrt{t} x^2$. Hence
\[
a^{4/7 - 2 \epsilon} = (D x)^{4/7 - 2 \epsilon} \ll_{l} t^{2/7 - \epsilon} x^{8/7 - 4 \epsilon} \ll_{\epsilon, l} t
\]
as $t \gg_{\epsilon, l} x^{8/5 - 5 \epsilon}$ provided $\epsilon < 2/5$. This together with \eqref{size} yields
\[
b \gg_{l} \sqrt[4]{t} a \gg_{\epsilon, l} a^{8/7 - \epsilon}
\]
and, hence, Theorem \ref{thm4}.
\end{proof}


Department of Mathematics \\
Kennesaw State University \\
Marietta, GA 30060 \\
tchan4@kennesaw.edu

\end{document}